\newcommand{\R}{{\mathbb R}}
\newcommand{\D}{\mathcal{D}}
\def\sign{\operatorname{sign}}
\newcommand{\ci}[1]{_{{}_{\scriptstyle{#1}}}}
\newcommand{\Bel}{\mathbf B}
\newcommand{\av}[2]{\langle #1\rangle\ci {#2}}
\newcommand{\ve}{\varepsilon}
\newcommand{\vf}{\varphi}
\newtheorem{theorem}{Theorem}
\newtheorem{lemma}[theorem]{Lemma}
\newtheorem*{cor}{Corollary}
\numberwithin{equation}{section}
\newcommand{\BMO}{{\rm{BMO}}}
\newcommand{\eq}[2][label]{\begin{equation}\label{#1}#2\end{equation}}
\newcommand{\df}{\stackrel{\mathrm{def}}{=}}
\begin{document}

\title[Sharp constants in the the John--Nirenberg inequality]
{{Sharp constants in the classical weak form of the John--Nirenberg inequality}}
\author{Vasily Vasyunin}
\address{Vasily Vasyunin,
\newline{St.-Petersburg Department of V. A. Steklov Mathematical
Institute}
\newline{\tt vasyunin@pdmi.ras.ru}}
\author{Alexander Volberg}
\address{Alexander Volberg,
\newline{Department of  Mathematics, Michigan State University}
\newline{and the University of Edinburgh}
\newline{{\tt volberg@math.msu.edu}\ and\ {\tt a.volberg@ed.ac.uk}}}

\thanks{
The first author was partially supported by RFBR grant 11-01-00584.
\newline
Research of the second author was supported in part by NSF grants  DMS-0501067.}
\subjclass{Primary: 28A80. Secondary: 28A75, 60D05}

\begin{abstract}
The sharp constants in the classical John--Nirenberg inequality are found by using
Bellman function approach.
\end{abstract}

\maketitle

\section{Introduction}

Bellman function method in Harmonic Analysis was introduced by Burkholder for
finding the norm in $L^p$ of the Martingale transform. Later it became clear
that the scope of the method is quite wide. 

After Burkholder the first
systematic application of this technique appeared in 1995 in the first preprint
version of~\cite{NTV1}.  It was vastly developed in~\cite{NT} and in (now)
numerous  papers that followed. A small part of this literature can be found in \cite{NTV} and in lecture notes \cite{VoPr} and in the references section of the present article.
It became clear that magic  Burkholder function from \cite{Bu1} does not have too much in common with
Harmonic Analysis, it is a natural dweller of the area called
Stochastic Optimal Control. It is a solution of a corresponding Bellman
equation (or a dynamic programming equation), which appears from solving optimization problems. It turns out that the point of view that many Harmonic Analysis problems are optimization problems can be profitable. And this is even though many interesting extremal Harmonic Analysis problems may not have an actual extremizer because of the lack of the compactness in the problem.  However many Harmonic Analysis problem have their specific Bellman function, which is a solution of a certain Bellman (usually non-linear) PDE.

A crucial property of elements of $\BMO$-space, the exponential decay
of their distribution function, was established in the classical
paper~\cite{JN}; it is known as the John--Nirenberg inequality.

For an interval $I$, and a real-valued function $\vf\in L^1(I)$, let
$\av{\vf}I$ be the average of $\vf$ over $I$, i.e.,
$$
\av{\vf}I=\frac1{|I|}\int_I\vf,
$$
where $|I|$ stands for Lebesgue measure of $I$. For $1\le p<\infty$, let
\eq[BMO]
{ \BMO(J)=\left\{\vf\in L^1(J): \av{|\vf-\av{\vf}I|^p}I\le
C^p<\infty,\; \forall I\subset J\right\} }
with the best (smallest) such $C$ being the corresponding ``norm'' of
$\vf$. For $\ve\ge0,$ let
$$
\BMO_\ve(J)=\{\vf\in\BMO(J):\|\vf\|\le\ve\}.
$$
The classical definition of John and Nirenberg uses $p=1$; it is known that the
norms are equivalent for different $p$'s. For every $\vf\in\BMO(J)$ and
every $\lambda\in\mathbb{R}$ the classical John--Nirenberg inequality consists
in the following assertion.

\smallskip

\noindent{\bf Theorem}~(John, Nirenberg; weak form) \eq[i1.5]{
\frac1{|J|}|\{s\in J\colon |\vf(s)-\av{\vf}J|\ge\lambda\}|\le
c_1e^{-c_2\lambda/\|\vf\|_{\BMO(J)}}. }

\smallskip

I refer to this statement as to the weak form of  the John--Nirenberg
inequality to distinguish it from the following equivalent assertion.

\smallskip

\noindent{\bf Theorem}~(John, Nirenberg; integral form) {\it There exists
$\ve_0>0$ such that for every $\ve$, $0\le\ve<\ve_0$, there is $C(\ve)>0$ such
that for any function $\vf$, $\vf\in \BMO_{\ve}(J)$}, the following
inequality holds
$$
\av{e^\vf}J\le C(\ve)e^{\av{\vf}J}\,.
$$

\smallskip

The sharp constants in the integral form were found in~\cite{V} and~\cite{SV}.
In the second paper the dyadic analog $\BMO^d$ is considered as well, for which
every subinterval $I$ of $J$ in definition~\eqref{BMO} is an element of the
dyadic lattice rooted in $J$. It appears that the constants in the dyadic case
and the usual one are different.

The mentioned constants were found by using the so called Bellman function method
(see  survey~\cite{NTV} for historical remarks). Namely, the Bellman 
function of the corresponding
extremal problem (the definition see below) was found explicitly. This function
carries all the information about the problem: not only the sharp constants,
but, for example, construction of extremal test functions ({\it extremizers}). The
Bellman function corresponding to the integral John--Nirenberg inequality was
found by solving the boundary value problem for the Bellman equation. In that
case the Bellman equation was a second order PDE with two variables, and due to
a natural homogeneity of the problem, the Bellman PDE was reduced to an
ordinary differential equation, which was successfully solved. The
corresponding Bellman equation for the week John--Nirenberg inequality has an
additional parameter $\lambda$ preventing a similar reducing of the Bellman PDE
to an ordinary differential equation.

The Bellman equations for all these problems are in fact partial cases of the
Monge--Amp\`ere equation. After finding possibility to solve this type of
equation explicitly (see~\cite{SSV}, \cite{VV}) we are able to find the Bellman
function (and therefore, the sharp constants) for the weak John--Nirenberg
inequality as well. And this solution is described in the present paper.

We shall work with $L^2$-based $\BMO$-norm, i.e., $p=2$ will be chosen
in~\eqref{BMO}. For the classical case $p=1$, Korenovskii~\cite{K} established
the exact value $c_2=2/e$ using the equimeasurable rearrangements of the test
function and the ``sunrise lemma''. But to apply the Bellman function method
the $L^2$-based $\BMO$-norm is more appropriate. Some Bellman-type function
(so-called supersolution) for the weak John--Nirenberg inequality was proposed
by Tao in~\cite{T}, where there was no attempt to find true Bellman function
and sharp constants. In the present paper it will be proved that for $p=2$ the
sharp constant are $c_1=\frac4{e^2}$ and $c_2=1$.

\section{Definitions and statements of the main results}
\subsection{Bellman functions}
Now the main subject of the paper will be introduced, the Bellman function
corresponding to the John--Nirenberg inequality. First of all we define the
following set of test functions
\begin{multline}
\label{test}
S_\ve(x)=S(x_1,x_2;\ve)=\\
\{\vf\in\BMO(J)\colon\av\vf J=x_1, \av{\vf^2}J=x_2,
\av{|\vf-\av\vf I|^2}I\le\ve^2\;\forall I\subset J\}\,.
\end{multline}
For any test function $\vf$ the point $x=(x_1,x_2)=(\av{\vf}J,\av{\vf^2}J)$
belongs to the parabolic strip
\eq[Omega]{ \Omega_\ve=\{x=(x_1,x_2)\colon x_1^2\le x_2\le x_1^2+\ve^2\}\,. }
Indeed, the left inequality $x_1^2\le x_2$ is simply
the Cauchy inequality, but the right one $x_2\le
x_1^2+\ve^2$ follows from the fact that $\vf\in\BMO_\ve(J)$:
$$
x_2-x_1^2=\av{\vf^2}J-\av\vf J^2=\av{|\vf-\av\vf J|^2}J\le\ve^2\,.
$$

Now we define the Bellman $\Bel$ function corresponding to the weak
John--Nirenberg inequality:
\eq[Bel]{\Bel(x;\lambda)\df
\Bel(x;\lambda,\ve)\df\frac1{|I|}\sup\big\{|\{s\in I\colon|\vf(s)|\ge\lambda\}|
\colon\vf\in S_\ve(x)\big\}\,.
}

This function is defined on $\Omega$ and it supplies us with the sharp estimate
of the distribution function
\eq[Bel-JN]{
\frac1{|J|}|\{s\in J\colon|\vf(s)-\av\vf J|\ge\lambda\}|\;\;\le
\sup_{\xi\in[0,\ve^2]}\Bel(0,\xi;\lambda)\quad \forall\vf\in\BMO_\ve\,.
}
To check this, we consider a new function $\tilde\vf\df\vf+c$. If $\vf\in
S_\ve(x)$, then $\tilde\vf\in S_\ve(\tilde x)$, where $\tilde x_1=x_1+c$ and
$\tilde x_2=x_2+2cx_1+c^2$. Therefore, by definition~\eqref{Bel}, we have
$$
\frac1{|J|}|\{s\in J\colon|\tilde\vf(s)|\ge\lambda\}|\le\Bel(\tilde x;\lambda)\,.
$$
If we take now $c=-\av\vf J=-x_1$, we get $\tilde x_1=0$, $\tilde x_2=x_2-x_1^2$,
and the latter inequality turns into
$$
\frac1{|J|}|\{s\in J\colon|\vf(s)-\av\vf J|\ge\lambda\}|\le
\Bel(0,\tilde x_2;\lambda)\le
\sup_{\xi\in[0,\ve^2]}\Bel(0,\xi;\lambda)\,.
$$

So, to find the sharp constants in the weak John--Nirenberg inequality we prove
the following theorem.
\begin{theorem}
\label{WJNBel}
For $0\le\lambda\le\ve$ split $\Omega$ in three subdomains {\rm (see~Fig.~\ref{smallLambda}):}
\begin{figure}
\begin{center}
\begin{picture}(320,200)
\thinlines
\put(160,0){\vector(0,1){180}}
\put(10,20){\vector(1,0){300}}
\put(200,26){\vector(-1,1){10}}
\put(120,26){\vector(1,1){10}}
\linethickness{.3pt}
\thicklines
\qbezier[1500](40,164)(160,-124)(280,164)
\qbezier[1500](40,173)(160,-40)(280,173)
\qbezier[1500](160,20)(160,20)(96,61)
\qbezier[1500](160,20)(160,20)(224,61)
\qbezier[1500](96,61)(96,61)(224,61)
\put(148,45){\footnotesize $\Omega_2$}
\put(210,67){\footnotesize $\Omega_1$}
\put(203,23){\footnotesize $\Omega_3^+$}
\put(107,23){\footnotesize $\Omega_3^-$}
\put(280,150){\footnotesize $x_2=x_1^2$}
\put(205,120){\footnotesize $x_2=x_1^2+\ve^2$}
\put(96.2,61){\circle*{2}}
\multiput(96,20)(0,3){14}{\circle*{1}}
\put(96,20){\circle*{2}}
\put(91,8){\footnotesize $-\!\lambda$}
\put(160,20){\circle*{2}}
\put(224,60.7){\circle*{2}}
\multiput(224,20)(0,3){14}{\circle*{1}}
\put(224,20){\circle*{2}}
\put(221,8){\footnotesize $\lambda$}
\end{picture}
\caption{}
\label{smallLambda}
\end{center}
\end{figure}
\begin{align*}
\Omega_1&=\{x\in\Omega\colon x_2\ge\lambda^2\}\,,
\\
\Omega_2&=\{x\in\Omega\colon \lambda|x_1|\le x_2\le\lambda^2\}\,,
\\
\Omega_3&=\{x\in\Omega\colon x_2<\lambda|x_1|\}\,,
\end{align*}
then
\eq[JNBel1]{
\Bel(x;\lambda,\ve)=\begin{cases}
\qquad1\,,& x\in\Omega_1\,,\rule[-15pt]{0pt}{15pt}
\\
\displaystyle
\qquad\frac{x_2}{\lambda^2}\,,& x\in\Omega_2\,,\rule[-15pt]{0pt}{15pt}
\\
\displaystyle
\frac{x_2-x_1^2}{x_2+\lambda^2-2\lambda|x_1|}\,,\qquad & x\in\Omega_3\,.
\end{cases}
}
\smallskip

For $\ve<\lambda\le2\ve$ split $\Omega$ in four subdomains {\rm (see~Fig.~\ref{medLambda}):}
\begin{figure}
\begin{center}
\begin{picture}(320,200)
\thinlines
\put(160,0){\vector(0,1){180}}
\put(10,20){\vector(1,0){300}}
\put(273,114){\vector(-1,0){18}}
\put(213,77){\vector(1,-2){7}}
\put(200,26){\vector(-1,1){10}}
\put(152,43){\vector(1,-3){4}}
\put(48,114){\vector(1,0){18}}
\put(108,77){\vector(-1,-2){7}}
\put(120,26){\vector(1,1){10}}
\linethickness{.3pt}
\thicklines
\qbezier[1500](40,164)(160,-124)(280,164)
\qbezier[1500](40,173)(160,-100)(280,173)
\qbezier[1500](160,20)(160,20)(96,61)
\qbezier[1500](160,20)(160,20)(224,61)
\qbezier[1500](96,61)(96,61)(136,42)
\qbezier[1500](224,61)(224,61)(184,42)
\qbezier[1500](96,61)(96,61)(56,139)
\qbezier[1500](224,61)(224,61)(264,139)
\put(148,45){\footnotesize $\Omega_4$}
\put(276,112){\footnotesize $\Omega_1^+$}
\put(210,80){\footnotesize $\Omega_2^+$}
\put(203,23){\footnotesize $\Omega_3^+$}
\put(33,112){\footnotesize $\Omega_1^-$}
\put(107,80){\footnotesize $\Omega_2^-$}
\put(107,23){\footnotesize $\Omega_3^-$}
\put(280,150){\footnotesize $x_2=x_1^2$}
\put(205,120){\footnotesize $x_2=x_1^2+\ve^2$}
\put(56,140){\circle*{2}}
\multiput(56,20)(0,3){40}{\circle*{1}}
\put(56,20){\circle*{2}}
\put(42,9){\footnotesize $-\ve\!-\!\lambda$}
\put(96,61){\circle*{2}}
\multiput(96,20)(0,3){14}{\circle*{1}}
\put(96,20){\circle*{2}}
\put(91,8){\footnotesize $-\!\lambda$}
\put(136,42){\circle*{2}}
\multiput(136,20)(0,3){8}{\circle*{1}}
\put(136,20){\circle*{2}}
\put(128,8){\footnotesize $\ve\!-\!\lambda$}
\put(160,20){\circle*{2}}
\put(184,42){\circle*{2}}
\multiput(184,20)(0,3){8}{\circle*{1}}
\put(184,20){\circle*{2}}
\put(184,8){\footnotesize $\lambda\!-\!\ve$}
\put(224,60.7){\circle*{2}}
\multiput(224,20)(0,3){14}{\circle*{1}}
\put(224,20){\circle*{2}}
\put(221,8){\footnotesize $\lambda$}
\put(264,138){\circle*{2}}
\multiput(264,20)(0,3){40}{\circle*{1}}
\put(264,20){\circle*{2}}
\put(256,8){\footnotesize $\lambda\!+\!\ve$}
\end{picture}
\caption{}
\label{medLambda}
\end{center}
\end{figure}
\begin{align*}
\Omega_1&=\{x\in\Omega\colon |x_1|\ge\lambda \text{ and }
x_2\le2(\lambda+\ve)|x_1|-\lambda^2-2\ve\lambda
\text{ for }|x_1|<\lambda+\ve,\}\,,
\\
\Omega_2&=\{x\in\Omega\colon \lambda-\ve\le|x_1|\le\lambda+\ve,\,
x_2\ge\max\{2\lambda |x_1|-\lambda^2\pm2\ve(|x_1|-\lambda)\}\}\,,
\\
\Omega_3&=\{x\in\Omega\colon x_2<\lambda|x_1|\}\,,
\\
\Omega_4&=\{x\in\Omega\colon x_2\ge\lambda|x_1|\text{ and }
x_2\le2(\lambda-\ve)|x_1|-\lambda^2+2\ve\lambda\text{ for }|x_1|>\lambda-\ve\}\,,
\end{align*}
then
\eq[JNBel2]{
\Bel(x;\lambda,\ve)=\begin{cases}
\qquad\qquad1\,,& x\in\Omega_1\,,\rule[-15pt]{0pt}{15pt}
\\
\displaystyle
\frac{2(\lambda^2-\ve^2)|x_1|-(\lambda-\ve)x_2+\lambda(2\ve^2+\ve\lambda-\lambda^2)}
{2\ve\lambda^2}\,,& x\in\Omega_2\,,\rule[-15pt]{0pt}{15pt}
\\
\displaystyle
\qquad\frac{x_2-x_1^2}{x_2+\lambda^2-2\lambda|x_1|}\,,\qquad &
x\in\Omega_3\,,\rule[-20pt]{0pt}{15pt}
\\
\displaystyle
\qquad\qquad\frac{x_2}{\lambda^2}\,,& x\in\Omega_4\,.
\end{cases}
}

\smallskip

For $\lambda>2\ve$ split $\Omega$ in five subdomains {\rm (see~Fig.~\ref{bigLambda}):}
\begin{figure}
\begin{center}
\begin{picture}(320,200)
\thinlines
\put(160,0){\vector(0,1){180}}
\put(10,20){\vector(1,0){300}}
\put(294,174){\vector(-2,-1){14}}
\put(240,105){\vector(2,1){14}}
\put(243,55){\vector(-1,1){15}}
\put(181,44){\vector(1,-3){4}}
\put(154,38){\vector(1,-4){3}}
\put(26,174){\vector(2,-1){14}}
\put(81,105){\vector(-2,1){14}}
\put(79,58){\vector(1,1){13}}
\put(133,44){\vector(1,-3){5}}
\linethickness{.3pt}
\thicklines
\qbezier[1500](40,164)(160,-124)(280,164)
\qbezier[1500](40,173)(160,-115)(280,173)
\qbezier[1500](160,20)(160,20)(188,37)
\qbezier[1500](160,20)(160,20)(132,37)
\qbezier[1500](195,32)(195,32)(254,108)
\qbezier[1500](254,108)(254,108)(286,188)
\qbezier[1500](125.5,32)(125.5,32)(66,108)
\qbezier[1500](66,108)(66,108)(34,188)
\put(150,39){\footnotesize $\Omega_5$}
\put(296,172){\footnotesize $\Omega_1^+$}
\put(225,100){\footnotesize $\Omega_2^+$}
\put(240,45){\footnotesize $\Omega_3^+$}
\put(178,47){\footnotesize $\Omega_4^+$}
\put(14,172){\footnotesize $\Omega_1^-$}
\put(83,100){\footnotesize $\Omega_2^-$}
\put(70,50){\footnotesize $\Omega_3^-$}
\put(128,47){\footnotesize $\Omega_4^-$}
\put(280,150){\footnotesize $x_2=x_1^2$}
\put(205,120){\footnotesize $x_2=x_1^2+\ve^2$}
\put(36.8,182){\circle*{2}}
\multiput(36.8,20)(0,3){55}{\circle*{1}}
\put(36.8,20){\circle*{2}}
\put(22,12){\footnotesize $-\ve\!-\!\lambda$}
\put(66.3,108.3){\circle*{2}}
\multiput(66.3,20)(0,3){30}{\circle*{1}}
\put(66.3,20){\circle*{2}}
\put(62,6){\footnotesize $-\!\lambda$}
\put(96,70){\circle*{2}}
\multiput(96,20)(0,3){17}{\circle*{1}}
\put(96,20){\circle*{2}}
\put(88,12){\footnotesize $\ve\!-\!\lambda$}
\put(126,32){\circle*{2}}
\multiput(126,20)(0,3){5}{\circle*{1}}
\put(126,20){\circle*{2}}
\put(110,6){\footnotesize $2\ve\!-\!\lambda$}
\put(132,37){\circle*{2}}
\multiput(132,20)(0,3){6}{\circle*{1}}
\put(132,20){\circle*{2}}
\put(129,12){\footnotesize $-\ve$}
\put(160,20){\circle*{2}}
\put(188,36.5){\circle*{2}}
\put(188,20){\circle*{2}}
\multiput(188,20)(0,3){6}{\circle*{1}}
\put(182,12){\footnotesize $\ve$}
\put(195,32){\circle*{2}}
\multiput(195,20)(0,3){5}{\circle*{1}}
\put(195,20){\circle*{2}}
\put(190,6){\footnotesize $\lambda\!-\!2\ve$}
\put(224.7,70){\circle*{2}}
\multiput(224.7,20)(0,3){17}{\circle*{1}}
\put(224.7,20){\circle*{2}}
\put(218,12){\footnotesize $\lambda\!-\!\ve$}
\put(254,107.5){\circle*{2}}
\multiput(254,20)(0,3){30}{\circle*{1}}
\put(254,20){\circle*{2}}
\put(251,6){\footnotesize $\lambda$}
\put(284,182){\circle*{2}}
\multiput(284,20)(0,3){55}{\circle*{1}}
\put(284,20){\circle*{2}}
\put(276,12){\footnotesize $\lambda\!+\!\ve$}
\end{picture}
\caption{}
\label{bigLambda}
\end{center}
\end{figure}
\begin{align*}
\Omega_1&=\{x\in\Omega\colon |x_1|\ge\lambda \text{ and }
x_2\le2(\lambda+\ve)|x_1|-\lambda^2-2\ve\lambda
\text{ for }|x_1|<\lambda+\ve,\}\,,
\\
\Omega_2&=\{x\in\Omega\colon \lambda-\ve\le|x_1|\le\lambda+\ve,\,
x_2\ge\max\{2\lambda |x_1|-\lambda^2\pm2\ve(|x_1|-\lambda)\}\}\,,
\\
\Omega_3&=\{x\in\Omega\colon x_2<2(\lambda-\ve)|x_1|-\lambda^2+2\ve\lambda\}\,,
\\
\Omega_4&=\{x\in\Omega\colon x_2\ge2(\lambda-\ve)|x_1|-\lambda^2+2\ve\lambda
\text{ and }x_2\le2\ve|x_1|\text{ for }|x_1|<\ve\}\,,
\\
\Omega_5&=\{x\in\Omega\colon x_2\ge2\ve|x_1|\}\,,
\end{align*}
then
\eq[JNBel3]{
\Bel(x;\lambda,\ve)=\begin{cases}
\qquad\qquad1\,,& x\in\Omega_1\,,\rule[-15pt]{0pt}{15pt}
\\
\displaystyle
\qquad1-\frac{x_2-2(\lambda+\ve)|x_1|+\lambda^2+2\ve\lambda}
{8\ve^2}\,,& x\in\Omega_2\,,\rule[-15pt]{0pt}{15pt}
\\
\displaystyle
\qquad\frac{x_2-x_1^2}{x_2+\lambda^2-2\lambda|x_1|}\,,\qquad &
x\in\Omega_3\,,\rule[-20pt]{0pt}{15pt}
\\
\displaystyle
\frac e2\left(\!1\!-\!\sqrt{1\!-\!\frac{x_2\!-\!x_1^2}{\ve^2}}\right)
\exp\left\{\frac{|x_1|\!-\!\lambda}\ve\!+\!\sqrt{1\!-\!\frac{x_2\!-\!x_1^2}{\ve^2}}\right\},
& x\in\Omega_4\,,\rule[-20pt]{0pt}{15pt}
\\
\displaystyle
\qquad\qquad\frac{x_2}{4\ve^2}\exp\left\{2-\frac\lambda\ve\right\},& x\in\Omega_5\,.
\end{cases}
}
\end{theorem}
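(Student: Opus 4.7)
The plan is to verify each of \eqref{JNBel1}--\eqref{JNBel3} by the two halves of the standard Bellman function duality: the candidate function $B$ on the right-hand side is to be shown to be both an upper bound and a lower bound for $\Bel$. The upper bound $\Bel\le B$ follows from proving that $B$ satisfies the main (midpoint-concavity) Bellman inequality
\[
B\Bigl(\tfrac{x^++x^-}{2};\lambda,\ve\Bigr)\;\ge\;\tfrac12\bigl(B(x^+;\lambda,\ve)+B(x^-;\lambda,\ve)\bigr)
\]
for every pair $x^\pm\in\Omega$ whose midpoint also lies in $\Omega$, together with the boundary value $B(x_1,x_1^2;\lambda,\ve)=\chi_{\{|x_1|\ge\lambda\}}$ on the lower parabola (the only element of $S_\ve$ corresponding to such a point being a constant). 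Iterating the inequality along dyadic splittings of $J$ then gives $\Bel\le B$ on $\Omega$ by a routine limiting argument. The lower bound $\Bel\ge B$ is obtained by constructing, for each $x$ interior to a subdomain, an explicit sequence $\vf_n\in S_\ve(x)$ that achieves $B(x;\lambda,\ve)$ in the limit.

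For the concavity of $B$, on every \emph{affine} piece of each candidate -- the constants of $\Omega_1$, the ratios $x_2/\lambda^2$ and $x_2/(4\ve^2)\exp(2-\lambda/\ve)$, and the linear expressions that occupy $\Omega_2$ in the second and third regimes -- the midpoint inequality is trivial up to the convexity of the parabola $x_2=x_1^2$. The genuinely non-trivial pieces are the rational formula $(x_2-x_1^2)/(x_2+\lambda^2-2\lambda|x_1|)$ on $\Omega_3$, which recurs in all three regimes, and the transcendental formula on $\Omega_4$ of the third regime. In each of these $B$ is a developable solution of the homogeneous Monge--Amp\`ere equation $B_{11}B_{22}-B_{12}^2=0$, built via the recipe of \cite{SSV} and \cite{VV}. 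The extremal foliation is, for $\Omega_3$, the pencil of segments emanating from the vertices $(\pm\lambda,\lambda^2)$ and terminating on the lower parabola $x_2=x_1^2$; for $\Omega_4$ in the third regime it is the pencil of tangent lines to the \emph{upper} parabola $x_2=x_1^2+\ve^2$. Along each such chord $B$ is affine, while the transversal direction is strictly concave. The global concavity inequality then reduces to checking $C^1$ compatibility of adjacent pieces across every interior boundary of the subdomain decomposition, after which midpoint inequalities with endpoints in different subdomains follow automatically.

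The lower bound is produced by \emph{following} the extremal foliation. In $\Omega_3$ the segment joining $x$ to one of the vertices $(\pm\lambda,\lambda^2)$ (the other endpoint on the lower parabola) yields a two-valued test function whose proportion of mass on the $(\pm\lambda,\lambda^2)$-side is exactly the stated rational formula. In $\Omega_2$, in $\Omega_4$ of the second regime, and in $\Omega_5$ of the third regime, one first passes by a suitable splitting to $\Omega_3$ or $\Omega_4$ and then repeats. The delicate construction is that of the extremizer in $\Omega_4$ of the third regime: the tangent lines to the upper parabola do not reach the lower parabola in finitely many splittings, so the extremizer must be built self-similarly, as in the sharp integral John--Nirenberg extremizer of \cite{V} and \cite{SV}. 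Each generation of the construction replaces the current test function with two rescaled copies of itself, and the infinite product of scaling factors produces precisely the exponential $\exp\{(|x_1|-\lambda)/\ve+\sqrt{1-(x_2-x_1^2)/\ve^2}\}$.

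The main obstacle is twofold. First, the three regimes $\lambda\le\ve$, $\ve<\lambda\le2\ve$, and $\lambda>2\ve$ each call for a different subdivision of $\Omega$ because the separating curves between the various extremal foliations move with $\lambda$; in particular, in $\Omega_4$ of the third regime the correct Monge--Amp\`ere foliation is not the naive one, and one has to guess the family of tangent lines to the upper parabola and solve the associated first-order linear ODE along it to recover the transcendental formula. Second, the bookkeeping across the twelve subdomains must be organized so that the $C^1$ gluing and the concavity inequality can be checked uniformly, including on the symmetry axis $x_1=0$ where the $|x_1|$-based formulas join their reflections. Once the correct foliation has been identified and the adjacent subdomain formulas shown to match to first order on their common boundaries, the remainder of the argument is a lengthy but routine case-by-case verification.
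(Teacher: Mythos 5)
Your overall architecture --- local concavity plus an inductive splitting argument for the upper bound, and extremizers read off the Monge--Amp\`ere foliation for the lower bound --- is the paper's. But the way you set up the upper bound has a genuine gap. You state the main inequality as a \emph{midpoint} concavity condition for pairs $x^\pm\in\Omega$ ``whose midpoint also lies in $\Omega$,'' and propose to iterate it ``along dyadic splittings of $J$.'' Two problems. First, the candidate $B$ is only \emph{locally} concave: the inequality $B(\alpha_+x^++\alpha_-x^-)\ge\alpha_+B(x^+)+\alpha_-B(x^-)$ is available only when the \emph{entire} segment $[x^-,x^+]$ lies in $\Omega_\ve$; requiring merely that the endpoints and the midpoint lie in $\Omega_\ve$ is a strictly stronger hypothesis on $B$, which is neither verified nor to be expected for these candidates, since the parabolic strip is far from convex. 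Second, for the continuous $\BMO$ class a dyadic (midpoint) splitting of $J$ gives no control on whether the chord $[x^{J_-},x^{J_+}]$ stays inside the strip; this is exactly the point at which the dyadic and non-dyadic Bellman functions (and hence the sharp constants) diverge, as the paper emphasizes. The paper's fix is the Splitting Lemma: for each subinterval one chooses a \emph{non-dyadic} splitting point, with ratios uniformly bounded away from $0$ and $1$, so that the chord lies in the slightly larger strip $\Omega_\delta$, $\delta>\ve$; one then runs the induction with the locally concave bounded function $B(\cdot;\lambda,\delta)$ on $\Omega_\delta$ and passes to the limit $\delta\to\ve$ using continuity of $B$ in $\delta$. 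Without this device your upper bound does not close.

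Two smaller points. Your reduction of global concavity to ``$C^1$ compatibility of adjacent pieces'' is inaccurate as stated: across the boundaries of $\Omega_2$ (in the second and third regimes) the candidate is \emph{not} $C^1$; the gradient jumps, and what must be checked is that the jump of a transversal derivative has the sign compatible with concavity (the paper checks the jump of $B_{x_2}$). Genuine $C^1$ matching is needed, and holds, only across $\partial\Omega_4\cap\partial\Omega_3$ and $\partial\Omega_4\cap\partial\Omega_5$. Also, the supremum here is attained by explicit exact extremizers (step functions, and a truncated logarithm along the tangent-line foliation in $\Omega_4$), so no approximating sequence is needed; and for each extremizer one still has to \emph{prove} that its $\BMO$ norm does not exceed $\ve$, which the paper does by a geometric (delivery-curve) argument that your outline omits.
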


\begin{cor}
If $\vf\in\BMO_\ve(I)$, then
$$
\frac1{|I|}|\{s\in I\colon|\vf(s)-\av\vf I|\ge\lambda\}|\;\;\le
\begin{cases}
1,&\text{if }\quad0\le\lambda\le\ve,\rule[-15pt]{0pt}{15pt}
\\
\displaystyle
\frac{\ve^2}{\lambda^2}&\text{if}\quad\ve\le\lambda\le2\ve,\rule[-15pt]{0pt}{15pt}
\\
\displaystyle
\frac{e^2}4e^{-\lambda/\ve}\quad&\text{if}\quad2\ve\le\lambda,
\end{cases}
$$
and this bound is sharp.
\end{cor}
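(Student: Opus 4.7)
The plan is to apply the reduction \eqref{Bel-JN}, which bounds the left-hand side by $\sup_{\xi\in[0,\ve^2]}\Bel(0,\xi;\lambda,\ve)$, and then read off this supremum from the explicit formulas \eqref{JNBel1}, \eqref{JNBel2}, \eqref{JNBel3} of Theorem~\ref{WJNBel}. For each of the three regimes $0\le\lambda\le\ve$, $\ve<\lambda\le2\ve$, $\lambda>2\ve$, I locate the vertical segment $\{(0,\xi):0\le\xi\le\ve^2\}$ inside the subdomain decomposition of $\Omega$, and maximize the piecewise expression in $\xi$.

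For $0\le\lambda\le\ve$ and $x_1=0$, the domain $\Omega_3$ (where $x_2<\lambda|x_1|=0$) is empty, while $\Omega_1$ (where $x_2\ge\lambda^2$) is nonempty because $\lambda^2\le\ve^2$; there $\Bel=1$, so the supremum equals $1$. For $\ve<\lambda\le2\ve$, checking the defining conditions in \eqref{JNBel2} at $x_1=0$ rules out $\Omega_1$ (needs $|x_1|\ge\lambda$), $\Omega_2$ (needs $|x_1|\ge\lambda-\ve>0$), and $\Omega_3$ (needs $x_2<0$), so the whole segment lies in $\Omega_4$, where $\Bel(0,\xi;\lambda,\ve)=\xi/\lambda^2$; the supremum on $[0,\ve^2]$ is $\ve^2/\lambda^2$. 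For $\lambda>2\ve$, similar case-checking in \eqref{JNBel3} excludes $\Omega_1,\Omega_2,\Omega_3$ at $x_1=0$, and also $\Omega_4$ (since the condition $x_2\le2\ve|x_1|$ for $|x_1|<\ve$ forces $x_2=0$); thus $(0,\xi)\in\Omega_5$ for $\xi>0$, giving $\Bel(0,\xi;\lambda,\ve)=\xi e^{2-\lambda/\ve}/(4\ve^2)$, whose supremum over $\xi\in[0,\ve^2]$ equals $(e^2/4)e^{-\lambda/\ve}$. These are precisely the three stated bounds.

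Sharpness follows from the very definition \eqref{Bel} of $\Bel$ as a pointwise supremum over the class $S_\ve(x)$: for each $\xi$ realizing the maximum in the argument above, one exhibits a sequence $\vf_n\in S_\ve(0,\xi)$ for which the normalized distribution measure converges to $\Bel(0,\xi;\lambda,\ve)$. Such extremizers are built along the characteristics of the homogeneous Monge--Amp\`ere equation satisfied by $\Bel$, following the method of~\cite{SSV}, \cite{VV} invoked in the proof of the main theorem.

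The main obstacle in the argument is not in this corollary itself, which reduces to routine case-analysis once Theorem~\ref{WJNBel} is in hand, but in ensuring that the piecewise formulas \eqref{JNBel1}--\eqref{JNBel3} really describe the true Bellman function, i.e. that they are simultaneously upper bounds (from the concavity-type supersolution property) and lower bounds (through an explicit extremal construction). That two-sided verification is the substantive content of Theorem~\ref{WJNBel}; with it granted, the corollary is obtained simply by restricting $\Bel$ to the axis $x_1=0$ and taking one supremum in $\xi$.
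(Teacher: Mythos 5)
Your proposal is correct and follows essentially the same route as the paper: reduce via \eqref{Bel-JN} to $\sup_{\xi\in[0,\ve^2]}\Bel(0,\xi;\lambda,\ve)$ and evaluate this from Theorem~\ref{WJNBel} (the paper simply notes $\Bel(0,\cdot)$ is increasing so the supremum sits at $\xi=\ve^2$, while you locate the segment $x_1=0$ in the subdomain decomposition — an equivalent routine computation). Your case analysis checks out in all three regimes, and your remarks on sharpness and on Theorem~\ref{WJNBel} carrying the real content match the paper's treatment.
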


\begin{proof}
According to formula~\eqref{Bel-JN} it is sufficient to calculate
$$
\sup_{\xi\in[0,\ve^2]}\Bel(0,\xi;\lambda,\ve)\,.
$$
Since $\Bel(0,x_2;\lambda,\ve)$ is an increasing function in $x_2$,
this supremum is just the value $\Bel(0,\ve^2;\lambda,\ve)$, what
yields the stated formula.
\end{proof}

Before we start to prove Theorem~\ref{WJNBel}, where the Bellman
function has two singularities on the boundary at the points
$x=(\pm\lambda,\lambda^2)$, let us consider the simplest possible
extremal problem with one singularity. We shall consider two
extremal problems simultaneously: one estimate from above
and the second estimate from below. So, we define two Bellman
functions: $\Bel_{\max}$ and $\Bel_{\min}$.
$$
\Bel_{\max}(x;\lambda,\ve)\df\frac1{|I|}\sup\big\{|\{s\in I\colon\vf(s)\ge\lambda\}|
\colon\vf\in S_\ve(x)\big\}\,,
$$
$$
\Bel_{\min}(x;\lambda,\ve)\df\frac1{|I|}\inf\big\{|\{s\in I\colon\vf(s)\ge\lambda\}|
\colon\vf\in S_\ve(x)\big\}\,,
$$
For these function the following formula will be proved:

\begin{theorem}
\label{1jump}
Split $\Omega$ in the following five subdomains {\rm (see~Fig.~\ref{oneLambda}):}
\begin{figure}
\begin{center}
\begin{picture}(320,200)
\thinlines
\put(160,0){\vector(0,1){180}}
\put(10,20){\vector(1,0){300}}
\linethickness{.7pt}
\qbezier[1500](30,190)(160,-150)(290,190)
\qbezier[1500](40,195)(160,-93)(280,195)
\qbezier[1500](175,22)(175,22)(284,176)
\qbezier[1500](175,22)(175,22)(64.8,112)
\put(170,42){\footnotesize $\Omega_3$}
\put(220,73){\footnotesize $\Omega_2$}
\put(276.5,180){\footnotesize $\Omega_1$}
\put(100,63){\footnotesize $\Omega_4$}
\put(53,140){\footnotesize $\Omega_5$}
\put(25,122){\footnotesize $x_2=x_1^2$}
\put(205,140){\footnotesize $x_2=x_1^2+\ve^2$}
\put(64.2,112.5){\circle*{2}}
\multiput(64.2,20)(0,3){32}{\circle*{1}}
\put(64.2,20){\circle*{2}}
\put(54,8){\footnotesize $\lambda\!-\!2\ve$}
\put(120,67.2){\circle*{2}}
\multiput(120,20)(0,3){16}{\circle*{1}}
\put(120,20){\circle*{2}}
\put(111,8){\footnotesize $\lambda\!-\!\ve$}
\put(175.3,20){\circle*{2}}
\put(172,8){\footnotesize $\lambda$}
\put(175.3,22.3){\circle*{2}}
\put(232,102){\circle*{2}}
\multiput(232,20)(0,3){28}{\circle*{1}}
\put(232,20){\circle*{2}}
\put(222,8){\footnotesize $\lambda\!+\!\ve$}
\put(285,176.5){\circle*{2}}
\multiput(285,20)(0,3){53}{\circle*{1}}
\put(285,20){\circle*{2}}
\put(275,8){\footnotesize $\lambda\!+\!2\ve$}
\end{picture}
\caption{}
\label{oneLambda}
\end{center}
\end{figure}
\begin{align*}
\Omega_1&=\{x\in\Omega\colon x_1\ge\lambda+\ve,\,
x_2\ge2(\lambda+\ve)x_1-\lambda^2-2\ve\lambda\}\,,
\\
\Omega_2&=\{x\in\Omega\colon x_2\le2(\lambda+\ve)x_1-\lambda^2-2\ve\lambda\}\,,
\\
\Omega_3&=\{x\in\Omega\colon \lambda-\ve\le x_1\le\lambda+\ve,\,
x_2\ge2\lambda x_1-\lambda^2+2\ve|x_1-\lambda|\}\,,
\\
\Omega_4&=\{x\in\Omega\colon x_2\le2(\lambda-\ve)x_1-\lambda^2+2\ve\lambda\}\,,
\\
\Omega_5&=\{x\in\Omega\colon x_1\le\lambda-\ve,\,
x_2\ge2(\lambda-\ve)x_1-\lambda^2+2\ve\lambda\}\,.
\end{align*}
Then
\eq[1max]{
\Bel_{\max}(x;\lambda,\ve)=
\begin{cases}
\qquad\qquad1\,,& \hskip-18pt x\in\Omega_1\cup\Omega_2\,,\rule[-15pt]{0pt}{15pt}
\\
\displaystyle
\qquad1-\frac{x_2-2(\lambda+\ve)x_1+\lambda^2+2\ve\lambda}
{8\ve^2}\,,& x\in\Omega_3\,,\rule[-15pt]{0pt}{15pt}
\\
\displaystyle
\qquad\frac{x_2-x_1^2}{x_2+\lambda^2-2\lambda x_1}\,,\qquad &
x\in\Omega_4\,,\rule[-20pt]{0pt}{15pt}
\\
\displaystyle
\frac e2\left(\!1\!-\!\sqrt{1\!-\!\frac{x_2\!-\!x_1^2}{\ve^2}}\right)
\exp\left\{\frac{x_1\!-\!\lambda}\ve\!+\!\sqrt{1\!-\!\frac{x_2\!-\!x_1^2}{\ve^2}}\right\},
& x\in\Omega_5\,,
\end{cases}
}
and
\eq[1min]{
\Bel_{\min}(x;\lambda,\ve)=
\begin{cases}
\qquad\qquad0\,,& \hskip-18pt x\in\Omega_5\cup\Omega_4\,,\rule[-15pt]{0pt}{15pt}
\\
\displaystyle
\frac{x_2-2(\lambda-\ve)x_1+\lambda^2-2\ve\lambda}
{8\ve^2}\,,& x\in\Omega_3\,,\rule[-15pt]{0pt}{15pt}
\\
\displaystyle
\qquad1-\frac{x_2-x_1^2}{x_2+\lambda^2-2\lambda x_1}\,,\qquad &
x\in\Omega_2\,,\rule[-20pt]{0pt}{15pt}
\\
\displaystyle
1-\frac e2\left(\!1\!-\!\sqrt{1\!-\!\frac{x_2\!-\!x_1^2}{\ve^2}}\right)
\exp\left\{\frac{\lambda\!-\!x_1}\ve\!+\!\sqrt{1\!-\!\frac{x_2\!-\!x_1^2}{\ve^2}}\right\},
& x\in\Omega_1\,.
\end{cases}
}
\end{theorem}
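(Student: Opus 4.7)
The proof of \eqref{1max} and \eqref{1min} follows the standard two-sided Bellman scheme. Since $\Bel_{\min}(x_1,x_2;\lambda,\ve)$ is obtained from $\Bel_{\max}(-x_1,x_2;-\lambda,\ve)$ via the substitution $\vf\mapsto-\vf$, I concentrate on \eqref{1max}; the minorant formula then follows by routine relabelling of the corresponding subdomains. Write $B(x)$ for the piecewise candidate on the right of \eqref{1max}. The plan is (A) to show $B\ge\Bel_{\max}$ by exhibiting $B$ as an admissible supersolution, and (B) to show $B\le\Bel_{\max}$ by building explicit extremizers.

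For (A), three properties of $B$ need verifying: (i) the boundary data on the lower parabola, $B(x_1,x_1^2)\ge\chi_{\{x_1\ge\lambda\}}$, read off directly from each piece; (ii) the a priori bound $0\le B\le 1$ on $\Omega$; and (iii) the main inequality
\[
B\bigl(\alpha x^++(1-\alpha)x^-\bigr)\ge\alpha B(x^+)+(1-\alpha)B(x^-),\qquad \alpha\in[0,1],
\]
for every chord $[x^-,x^+]\subset\Omega_\ve$. Given (i)--(iii), the desired inequality follows by iterating splits $I=I^-\sqcup I^+$: the averages $x^\pm$ of $\vf|_{I^\pm}$ give the convex combination $x=(|I^-|x^-+|I^+|x^+)/|I|$, so (iii) transports the bound from $x^\pm$ to $x$; iteration to arbitrary depth sends the children almost everywhere onto the lower parabola, and (i) combined with dominated convergence (using (ii)) yields $B(x)\ge|\{s\in I:\vf(s)\ge\lambda\}|/|I|$.

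The verification of (iii) is the computational heart. Within each $\Omega_i$ the formula is either constant, affine, rational, or the Monge--Amp\`ere solution in $\Omega_5$, and one checks directly that the Hessian is negative semidefinite along every chord direction compatible with the parabolic strip. The interior interfaces $\partial\Omega_i\cap\partial\Omega_j$ are all straight lines, each tangent to one of the two parabolas, and one verifies that $B$ is $C^1$ across them so that chords crossing an interface lose nothing. The exponential formula in $\Omega_5$ is the solution of the homogeneous Monge--Amp\`ere equation $B_{x_1x_1}B_{x_2x_2}-B_{x_1x_2}^2=0$ foliated by the chords tangent to the lower parabola that emanate from the singular boundary point $(\lambda,\lambda^2+\ve^2)$; this solution is produced by the technique of~\cite{SSV, VV}, and its $C^1$ match with the rational formula of $\Omega_4$ along the tangent line $x_2=2(\lambda-\ve)x_1-\lambda^2+2\ve\lambda$ fixes the integration constants.

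For (B), the extremizers are built from the foliation of $B$ by its characteristics. At each $x\in\Omega_\ve$, the characteristic chord through $x$ has endpoints $x^\pm$ lying either on the lower parabola (where $\vf\equiv x^\pm_1$ is itself a valid test function) or on an interface to a subdomain already handled in an inductive scheme on foliation depth; splitting $I$ in the proportions that make the averages of $\vf|_{I^\pm}$ equal to $x^\pm$ and iterating produces a sequence of step test functions in $S_\ve(x)$ whose $\lambda$-superlevel set has relative measure tending to $B(x)$. The main obstacle, I expect, is discovering the correct Monge--Amp\`ere foliation in $\Omega_5$ and verifying the $C^1$ gluing at the internal boundaries; modulo these bookkeeping tasks, the remaining Hessian and boundary-value calculations are mechanical.
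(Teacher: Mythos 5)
Your overall architecture coincides with the paper's: reduce $\Bel_{\min}$ to $\Bel_{\max}$ by $\vf\mapsto-\vf$, prove $\Bel_{\max}\le B$ by Bellman induction from local concavity plus the boundary data on the lower parabola, and prove $\Bel_{\max}\ge B$ by extremizers built along the characteristic foliation. However, two of your concrete claims are wrong and would derail the verification if carried out as stated. First, $B$ is \emph{not} $C^1$ across all internal interfaces. It is $C^1$ only across the tangent line separating $\Omega_4$ from $\Omega_5$ (there $B_{x_2}=\tfrac1{4\ve^2}$ on both sides); across $\partial\Omega_2\cap\partial\Omega_3$ the derivative $B_{x_2}$ jumps from $0$ to $-\tfrac1{8\ve^2}$, and across $\partial\Omega_3\cap\partial\Omega_4$ from $\bigl(\tfrac{x_1-\lambda}{x_2+\lambda^2-2\lambda x_1}\bigr)^2\ge0$ to $-\tfrac1{8\ve^2}$. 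The correct (and sufficient) statement is that the normal derivative jumps \emph{downward} in the direction of increasing $x_2$, which preserves concavity of the glued function; asserting and trying to verify $C^1$-smoothness there would fail. Second, your description of the foliation in $\Omega_5$ is geometrically impossible: no line through $(\lambda,\lambda^2+\ve^2)$ is tangent to the lower parabola (the tangency condition gives $(c-\lambda)^2=-\ve^2$). The exponential formula is in fact affine along the tangent lines to the \emph{upper} parabola $x_2=x_1^2+\ve^2$ (write $s=\sqrt{1-(x_2-x_1^2)/\ve^2}$; along the tangent at abscissa $a$ one has $s=(a-x_1)/\ve$ and the exponent $(x_1-\lambda)/\ve+s=(a-\lambda)/\ve$ is constant). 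The ``fan through a singular point'' picture you describe belongs to $\Omega_4$, where the rational expression is constant on lines through $(\lambda,\lambda^2)$. Since your part (B) builds extremizers by following the characteristics, using the wrong foliation in $\Omega_5$ would not reproduce the claimed value; the paper's extremizer there is an explicit concatenation of two constants with a logarithmic piece riding along the upper parabola, and its admissibility is checked by a geometric (delivery-curve) argument.

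A further technical gap: in your induction step you split $I$ arbitrarily and invoke the chord inequality, but for a general $\vf\in\BMO_\ve$ an arbitrary split need not keep the segment $[x^{I_-},x^{I_+}]$ inside $\Omega_\ve$, and the Lebesgue differentiation step requires the splitting ratios to be uniformly bounded away from $0$ and $1$. The paper resolves this with a splitting lemma producing chords inside the slightly larger strip $\Omega_\delta$, $\delta>\ve$, applies the concavity of $B(\cdot;\lambda,\delta)$ there, and then lets $\delta\to\ve$ using continuity of $B$ in $\delta$. Finally, your reduction of \eqref{1min} to \eqref{1max} silently converts $\{\vf\ge\lambda\}$ into $\{-\vf>-\lambda\}$, so one must also argue (as the paper does, by monotonicity and continuity in $\lambda$ away from the single singular boundary point $(\lambda,\lambda^2)$) that the strict and non-strict versions of the Bellman function agree.
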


\section{Proofs of the theorems}

Let us show that it is sufficient to prove Theorem~\ref{1jump} only for $\Bel_{\max}$,
then we get the lower Bellman function automatically. Indeed, since $\Bel_{\max}$ is
a continuous function in $\lambda$ for any fixed $x$ except one point on the lower boundary (i.e.
$x_2>x_1^2$), for any such $x$ and any $\eta>0$ we have:
$$
|\{s\in I\colon\vf(s)\ge\lambda+\eta\}|
\le|\{s\in I\colon\vf(s)>\lambda\}|
\le|\{s\in I\colon\vf(s)\ge\lambda\}|\,,
$$
which yields
$$
\Bel(x;\lambda+\eta)
\le\sup\big\{|\{s\in I\colon\vf(s)>\lambda\}|
\colon\vf\in S_\ve(x)\big\}
\le\Bel(x;\lambda)\,.
$$
Therefore, the Bellman function for the strict inequality in
the definition is the same as the Bellman function for the
non strict inequality, except one point on the boundary $x=(\lambda,\lambda^2)$, where we know the Bellman function
from the beginning, because for the points of the lower boundary
the set $S_\ve(x)$ consists of  only the constant test function
$\vf=x_1=\lambda$.

At the point $x=(\lambda,\lambda^2)$, where both Bellman function
are equal to $1$, $\Bel_{\max}(x)=\Bel_{\min}(x)=1$. At all other
points we have the following relation
$$
\Bel_{\min}(x_1,x_2;\lambda)=1-\Bel_{\max}(-x_1,x_2;-\lambda).
$$
Indeed,
\begin{align*}
\Bel_{\min}(x_1,x_2;\lambda)&=\frac1{|J|}\inf\big\{|\{s\in J\colon\vf(s)\ge\lambda\}|
\colon\vf\in S_\ve(x)\big\}
\\
&=1-\frac1{|J|}\sup\big\{|\{s\in J\colon\vf(s)<\lambda\}|\colon\vf\in S_\ve(x)\big\}
\\
&=1-\frac1{|J|}\sup\big\{|\{s\in J\colon-\vf(s)>-\lambda\}|\colon\vf\in S_\ve(x)\big\}
\\
&=1-\Bel_{\max}(-x_1,x_2;-\lambda)\,.
\end{align*}
Using this relation we obtain~\eqref{1min} from~\eqref{1max}.

When proving Theorem~\ref{WJNBel} we denote by $B$ the function from the right-hand
side of either~\eqref{JNBel1}, or~\eqref{JNBel2}, or~\eqref{JNBel3}, depending on
the relation between $\lambda$ and $\ve$, and $B$ will be the function from the
right-hand side of~\eqref{1max} in the proof of Theorem~\ref{1jump}. In any case $B$
will be a candidate for the role of the Bellman function, and to prove the
theorem we need in each case to check two inequalities for the corresponding
pair $\Bel$ and $B$: $\Bel(x)\le B(x)$ and $\Bel(x)\ge B(x)$ for every point
$x\in\Omega_\ve$.

To prove the upper estimate, we need, first, the local concavity of the function $B$:
\begin{equation}
\label{main}
B(\alpha_+x^++\alpha_-x^-)\ge \alpha_+B(x^+)+\alpha_-(x^-)\,,
\quad\alpha_\pm>0,\ \alpha_++\alpha_-=1,
\end{equation}
for any pair $x^\pm\in\Omega_\ve$ such that the whole straight-line segment
$[x^-,x^+]$ is in $\Omega_\ve$, and, second, the following splitting lemma
that can be found in~\cite{V} or~\cite{SV}:

\begin{lemma}[Splitting lemma]
\label{splitting}
Fix two positive numbers $\ve,\delta,$ with $\ve<\delta.$ For an
arbitrary interval $I$ and any function $\vf\in\BMO_\ve(I),$
there exists a splitting $I=I_+\cup I_-$ such that the whole
straight-line segment $[x^{I_-},x^{I_+}]$ is inside $\Omega_\delta$.
Moreover\textup, the parameters of splitting $\alpha_\pm\df|I_{\pm}|/|I|$
are separated form $0$ and $1$ by constants depending on $\ve$ and
$\delta$ only\textup, i.e. uniformly with respect to the choice of
$I$ and $\vf.$
\end{lemma}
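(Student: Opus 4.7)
The plan is to split $I$ into its left and right halves $I_\pm$, so $\alpha_\pm=\tfrac12$ and the separation-from-$0$-and-$1$ hypothesis is satisfied universally with $\tau=\tfrac12$. It then remains to verify that the chord $[x^{I_-},x^{I_+}]$ lies inside $\Omega_\delta$.

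Parametrise the chord as $y(\sigma)=(1-\sigma)x^{I_+}+\sigma x^{I_-}$ for $\sigma\in[0,1]$ and set
\[
a\df\av{\vf^2}{I_+}-\av{\vf}{I_+}^2,\quad
b\df\av{\vf^2}{I_-}-\av{\vf}{I_-}^2,\quad
c\df\av{\vf}{I_+}-\av{\vf}{I_-}.
\]
A direct expansion yields the chord-height identity
\[
y_2(\sigma)-y_1(\sigma)^2=(1-\sigma)a+\sigma b+\sigma(1-\sigma)c^2.
\]
The hypothesis $\vf\in\BMO_\ve(I)$ immediately gives $a,b\le\ve^2$ together with $\tfrac12(a+b)+\tfrac14c^2\le\ve^2$, so the chord height is at most $\ve^2<\delta^2$ at the three control values $\sigma=0,\tfrac12,1$.

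The work is in controlling the concave function $\sigma\mapsto y_2(\sigma)-y_1(\sigma)^2$ at its (possibly interior) maximum $\sigma^\ast=\tfrac12+(b-a)/(2c^2)$, where the value equals $\tfrac12(a+b)+\tfrac14c^2+(b-a)^2/(4c^2)$. The crude bound $\le\ve^2+c^2/4\le 2\ve^2$ already suffices whenever $\delta\ge\ve\sqrt2$. For the residual regime $\ve<\delta<\ve\sqrt2$ what must be shown is
\[
\frac{(b-a)^2}{4c^2}\le\ve^2-\Bigl(\frac{a+b}{2}+\frac{c^2}{4}\Bigr),
\]
and this does not follow from the $\BMO_\ve$ bound on $I$ alone. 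The crucial additional input is the $\BMO_\ve$ inequality applied to subintervals of $I$ straddling the split point --- concretely, to the one-parameter family $[\tfrac12|I|-h,|I|]$ and its mirror image $[0,\tfrac12|I|+h]$ as $h\to 0^+$. These yield quantitative constraints linking $a,b,c$ which, combined with the variance identity on $I$, imply the displayed inequality and hence $\max_{\sigma}[y_2-y_1^2]\le\ve^2<\delta^2$, placing the chord inside $\Omega_\delta$.

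The main obstacle is exactly this last step: extracting, uniformly in $\vf\in\BMO_\ve(I)$, the refinement of the $\BMO_\ve$-on-$I$ inequality needed to handle the interior-maximum term $(b-a)^2/(4c^2)$. The $\BMO_\ve$ condition on subintervals near the midpoint (rather than only the global bound on $I$) supplies precisely this missing information, and the bookkeeping required to combine several such local constraints into a clean algebraic bound independent of $\vf$ is where the technical heart of the argument lies.
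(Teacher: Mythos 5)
Your reduction to the chord-height identity is correct, and the crude bound settling the case $\delta\ge\ve\sqrt2$ is fine. But the plan for the remaining regime $\ve<\delta<\ve\sqrt2$ cannot be completed, because the statement you are trying to prove there --- that the \emph{midpoint} chord always stays inside $\Omega_\delta$ --- is false. Take $I=[0,1]$ and $\vf(t)=-\ve\log t$. By scale invariance of the variance of the logarithm, $\vf\in\BMO_\ve(I)$ with equality of norms (the variance over every interval $[0,c]$ is exactly $\ve^2$, and over $[c,d]$ with $c>0$ it is smaller). For the halves one computes $b=\ve^2$ (left half), $a=(1-2\log^2 2)\,\ve^2\approx0.039\,\ve^2$ (right half), and $c^2=4\ve^2\log^2 2$; the variance identity $\tfrac12(a+b)+\tfrac14c^2=\ve^2$ holds with equality. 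Your interior maximum sits at $\sigma^\ast=\tfrac34\in(0,1)$ and equals
$$
\ve^2+\frac{(b-a)^2}{4c^2}=\ve^2\Bigl(1+\tfrac14\log^2 2\Bigr)\approx1.12\,\ve^2\,.
$$
Hence for every $\delta$ with $\ve<\delta<\ve\sqrt{1+\tfrac14\log^2 2}\approx1.058\,\ve$ the midpoint chord leaves $\Omega_\delta$. Consequently the inequality you single out as ``what must be shown'' is simply not true for all $\vf\in\BMO_\ve(I)$, and no amount of extra $\BMO_\ve$ constraints from subintervals straddling the midpoint can produce it. This is exactly why the lemma is stated with splitting ratios $\alpha_\pm$ that are merely bounded away from $0$ and $1$, rather than fixed at $\tfrac12$.

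The paper itself does not reprove the lemma; it refers to~\cite{V} and~\cite{SV}, where the argument is different in precisely the respect your proposal misses: the splitting point is allowed to move. One considers the one-parameter family $I=[0,a]\cup[a,1]$, observes that $x^I$ always lies on the chord $[x^{I_-},x^{I_+}]$ and that the chord can only leave $\Omega_\delta$ through the upper parabola $x_2=x_1^2+\delta^2$, and then runs a continuity/intermediate-value argument on $a$: each half-chord $[x^{I_\pm},x^I]$ degenerates to the interior point $x^I$ as $a$ tends to the corresponding endpoint, so if the split at $a=\tfrac12$ fails one moves $a$ until the offending half-chord is pulled back inside $\Omega_\delta$, and a quantitative estimate shows the required displacement is controlled by $\ve$ and $\delta$ alone, which is what yields the uniform bounds on $\alpha_\pm$. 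If you want to salvage your write-up, you should replace the fixed midpoint split by this variable-split argument; the algebraic identities you derived remain useful for the quantitative step.
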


Here the following notation was used: for a function $\vf\in\BMO_\ve(J)$
and a subinterval $I\subset J$ we define a {\it Bellman point}
$x^I\df(\av\vf I,\av{\vf^2}I)$ in the domain $\Omega_\ve$.

Using this lemma we prove the following result.

\begin{lemma}
\label{Bell_induction}
Let $G$ be a locally concave bounded function on $\Omega_\delta$\textup,
$\delta>\ve$\textup, and $E$ is a measurable subset of $\R$. If the function $G$ satisfies
the following boundary condition
\eq[bcWJN]{
G(x_1,x_1^2)=
\begin{cases}
1,&\text{ if }x_1\in E;
\\
0,&\text{ if }x_1\not\in E,
\end{cases}
}
then
$$
\frac1{|I|}|\{s\colon\vf(s)\in E\}|\le G(x)
$$
for all $\vf\in S_\ve(x)$.
\end{lemma}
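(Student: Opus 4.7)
The plan is the standard Bellman induction: iterate the splitting lemma to decompose $I$ into ever smaller subintervals, apply the local concavity of $G$ at each splitting to compare averages of $G$ across generations, and pass to the limit using Lebesgue differentiation together with the boundary data on $\{x_2=x_1^2\}$.

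\textbf{Iterated splitting.} Fix $\vf\in S_\ve(x)$ on $I$, so that $x=x^I$. Since $\ve<\delta$, Lemma~\ref{splitting} produces a decomposition $I=I_+\cup I_-$ such that the segment $[x^{I_-},x^{I_+}]$ lies in $\Omega_\delta$. Writing $\alpha_\pm=|I_\pm|/|I|$, one has $x^I=\alpha_+x^{I_+}+\alpha_-x^{I_-}$, and so local concavity~\eqref{main} gives $G(x^I)\ge\alpha_+G(x^{I_+})+\alpha_-G(x^{I_-})$. Each restriction $\vf|_{I_\pm}$ is still in $\BMO_\ve(I_\pm)$, so I iterate, obtaining after $n$ steps a partition $\mathcal{F}_n$ of $I$ with
$$
G(x^I)\ge\sum_{J\in\mathcal{F}_n}\frac{|J|}{|I|}\,G(x^J).
$$
Because the splitting parameters are bounded away from $0$ and $1$ by constants depending only on $\ve$ and $\delta$, each cell shrinks by a fixed factor at every step, so $\max_{J\in\mathcal{F}_n}|J|\to0$ as $n\to\infty$.

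\textbf{Passage to the limit.} For a.e.\ $s\in I$ let $J_n(s)$ denote the cell of $\mathcal{F}_n$ containing $s$; these form a nested sequence shrinking to $\{s\}$. By the Lebesgue differentiation theorem, $\av{\vf}{J_n(s)}\to\vf(s)$ and $\av{\vf^2}{J_n(s)}\to\vf(s)^2$ almost everywhere, so the Bellman points $x^{J_n(s)}$ approach the lower-parabola point $(\vf(s),\vf(s)^2)$. Using the boundedness of $G$ and Fatou's lemma one has
$$
G(x)\ge\liminf_{n\to\infty}\int_I G(x^{J_n(s)})\,\frac{ds}{|I|}\ge\int_I\liminf_{n\to\infty}G(x^{J_n(s)})\,\frac{ds}{|I|},
$$
so the lemma reduces to the pointwise bound $\liminf_{n\to\infty}G(x^{J_n(s)})\ge\mathbf{1}_E(\vf(s))$ for a.e.\ $s\in I$.

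\textbf{The pointwise boundary estimate.} This last step is the main obstacle. When $\vf(s)\notin E$ it is automatic provided $G\ge0$, and this nonnegativity is itself a consequence of local concavity together with the boundary values in~\eqref{bcWJN}: any chord on $\{x_2=x_1^2\}$ has endpoints of $G$-value $0$ or $1$, so concavity forces $G\ge0$ throughout $\Omega_\delta$. When $\vf(s)\in E$ I take $\vf(s)$ to be a point of density $1$ of $E$ (which covers almost every such $s$), and use concavity along a short chord on the lower parabola whose endpoints lie in $E$ arbitrarily close to $\vf(s)$: by~\eqref{bcWJN} both endpoint values of $G$ equal $1$, and concavity propagates the bound $G\ge1-o(1)$ to a small interior neighborhood of $(\vf(s),\vf(s)^2)$, giving $\liminf_n G(x^{J_n(s)})\ge1$. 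Combining the two cases closes the argument.
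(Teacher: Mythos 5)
Your overall scheme is the same as the paper's: iterate the splitting lemma, use local concavity to pass the inequality through the generations of intervals, and invoke Lebesgue differentiation to send the Bellman points to the lower parabola; the paper then simply applies dominated convergence together with the boundary condition~\eqref{bcWJN}. You rightly isolate the delicate point --- one needs $\liminf_n G(x^{J_n(s)})\ge\mathbf 1_E(\vf(s))$ for a.e.\ $s$, since a locally concave $G$ with these boundary data need not be continuous up to the boundary --- and your treatment of the case $\vf(s)\notin E$ (the bound $G\ge0$ via chords of the parabola through an arbitrary interior point) is correct.

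The case $\vf(s)\in E$, however, is not closed. First, the reduction to density points is not legitimate as stated: the set $\{s\colon\vf(s)\in E,\ \vf(s)\text{ not a density point of }E\}$ can have positive measure, because the distribution of $\vf$ may have atoms sitting on the (Lebesgue-null) set of non-density points of $E$ --- for instance a step function attaining a value that is an endpoint of $E$; this is exactly the situation in the paper's application, where $E=\{t\colon|t|\ge\lambda\}$ and $\vf$ may equal $\lambda$ on a set of positive measure. Second, and more seriously, a single chord with both feet in $E$ forces $G\ge1$ only \emph{on that chord}, a one-dimensional set; it does not propagate to a neighborhood. The points $x^{J_n(s)}$ approach $(\vf(s),\vf(s)^2)$ tangentially to the parabola (their height $x_2^{J_n}-(x_1^{J_n})^2$ above it tends to $0$), so they eventually lie strictly below any fixed such chord, and the natural concavity comparison --- writing $x^{J_n(s)}$ as a convex combination of a point on the chord and a point on the parabola --- puts weight tending to $1$ on the parabola point, where $G$ may vanish, yielding only $G(x^{J_n(s)})\ge o(1)$. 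To make the idea work one must produce, for each large $n$, a chord \emph{through the point $x^{J_n(s)}$ itself} whose two feet both lie in $E$: varying the slope of the line through $x^{J_n(s)}$, the two feet move bi-Lipschitzly with the slope and remain within $O(\rho_n)$ of $\vf(s)$, where $\rho_n^2=x_2^{J_n}-(x_1^{J_n})^2\to0$, so the density of $E$ at $\vf(s)$ guarantees that an admissible slope exists once $n$ is large. That measure-theoretic selection is the missing idea in your final step.
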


We shall use this lemma to prove the theorem putting
$G(x)=B(x;\lambda,\delta)$ and then, using continuity of $B(x;\lambda, \delta)$ in $\delta$,
we pass to the limit $\delta\to\ve$. In such a way we get the upper estimate
$$
\Bel(x;\lambda,\ve)\le B(x;\lambda,\ve).
$$

\begin{proof}[Proof of Lemma~\ref{Bell_induction}]
Procedure of the proof is standard, as
in~\cite{V} or~\cite{SV}: we apply repeatedly main inequality~\eqref{main}
each time splitting the interval according to Lemma~\ref{splitting}.

Fix a function $\vf\in S_\ve(x)$. By the splitting lemma we
can split every subinterval $I\subset J,$ in such a way that the
segment $[x^{I_-},x^{I_+}]$ is inside $\Omega_\delta$. Since $G$
is locally concave, we have (we drop temporarily parameter $\delta$)
$$
|I|G(x^I)\ge|I_+|G(x^{I_+})+|I_-|G(x^{I_-})
$$
for any such splitting. Repeating this procedure $n$ times we get $2^n$ subintervals
of $n$-th generation (this set of intervals we denote by $\D_n$). So, we can write
the following chain of inequalities:
$$
|J|G(x^J)\ge|J_+|G(x^{J_+})+|J_-|G(x^{J_-})
\ge\sum_{I\in\D_n}|I|G(x^I)=\int_JG(x^{(n)}(s))\,ds\,,
$$
where $x^{(n)}(s)=x^I,$ when $s\in I,$ $I\in\D_n.$ By the Lebesgue
differentiation theorem we have $x^{(n)}(s)\to(\vf(s),\vf^2(s))$
almost everywhere. (We have used here the fact that we split the
intervals so that all coefficients $\alpha_\pm$ are uniformly separated
from~$0$ and~$1,$ and, therefore, $\max\{|I|\colon I\in\D_n\}\to0$ as
$n\to\infty$.) Since $G$ is bounded, we can pass to the limit in
this inequality by the Lebesgue dominated convergence theorem. Using
the boundary condition~\eqref{bcWJN} we obtain:
$$
|J|G(x^J)\ge\int_JG(\vf(s),\vf^2(s))\,ds=
\int_{\{s\colon\vf(s)\in E\}}\hskip-30pt ds\quad=|\{s\colon\vf(s)\in E\}|\,.
$$
Dividing the obtained inequality by $|J|$,
we come to the desired inequality.
\end{proof}

To complete proving the upper estimate $\Bel\le B$ both in Theorems~\ref{WJNBel}
and~\ref{1jump} we need to check local concavity of the functions $B$ defined
by~\eqref{JNBel1}, \eqref{JNBel2}, \eqref{JNBel3}, and~\eqref{1max}.

Let us check the most difficult case~\eqref{JNBel3}. In all other cases the
consideration is analogous.
$$
\frac{\partial B}{\partial x_1}=\begin{cases}
\qquad\qquad0\,,& x\in\Omega_1\,,\rule[-15pt]{0pt}{15pt}
\\
\displaystyle
\qquad\frac{\lambda+\ve}{4\ve^2}\sign{x_1}\,,&
x\in\Omega_2\,,\rule[-15pt]{0pt}{15pt}
\\
\displaystyle
\qquad\frac{2(x_2-\lambda|x_1|)(\lambda-|x_1|)}{(x_2+\lambda^2-2\lambda|x_1|)^2}\sign{x_1}\,,
\qquad &x\in\Omega_3\,,\rule[-20pt]{0pt}{15pt}
\\
\displaystyle
\frac e2\cdot\frac{\!\ve\!-\!|x_1|\!-\!\sqrt{\ve^2\!-\!x_2\!+\!x_1^2}}{\ve^2}
\exp\left\{\frac{|x_1|\!-\!\lambda}\ve\!+\!\sqrt{1\!-
\!\frac{x_2\!-\!x_1^2}{\ve^2}}\right\}\sign{x_1},
& x\in\Omega_4\,,\rule[-20pt]{0pt}{15pt}
\\
\displaystyle
\qquad\qquad0,& x\in\Omega_5\,;
\end{cases}
$$
\eq[Bx2]{
\frac{\partial B}{\partial x_2}=\begin{cases}
\qquad\qquad0\,,& x\in\Omega_1\,,\rule[-15pt]{0pt}{15pt}
\\
\displaystyle
\qquad-\frac1{8\ve^2}\,,& x\in\Omega_2\,,\rule[-15pt]{0pt}{15pt}
\\
\displaystyle
\qquad\Big(\frac{|x_1|-\lambda}{x_2+\lambda^2-2\lambda|x_1|}\Big)^2\,,\qquad &
x\in\Omega_3\,,\rule[-20pt]{0pt}{15pt}
\\
\displaystyle
\frac e{4\ve^2}
\exp\left\{\frac{|x_1|\!-\!\lambda}\ve\!+\!\sqrt{1\!-\!\frac{x_2\!-\!x_1^2}{\ve^2}}\right\},
& x\in\Omega_4\,,\rule[-20pt]{0pt}{15pt}
\\
\displaystyle
\qquad\qquad\frac1{4\ve^2}\exp\left\{2-\frac\lambda\ve\right\},& x\in\Omega_5\,.
\end{cases}
}
We see that the function $B$ is $C^1$-smooth on the boundaries $\Omega_5\cap\Omega_4$,
where
$$
B_{x_1}=0\,,\qquad B_{x_2}=\frac1{4\ve^2}\exp\big\{2-\frac\lambda\ve\big\}\,,
$$
and on $\Omega_4\cap\Omega_3$, where
$$
B_{x_1}=-\frac{\lambda-2\ve}{2\ve^2}\,,\qquad B_{x_2}=\frac1{4\ve^2}\,.
$$
On the boundary of $\Omega_2$ the first derivatives have jumps of the needed signs to
keep concavity of $B$.  First of all, we note that it is sufficient to consider a jump
along any direction transversal to the boundary, because along the boundary our functions
coincide and their derivatives coincide as well. (By the way, to check $C^1$-smoothness
of $B$ on the boundary of $\Omega_4$, it was sufficient to verify the continuity of
any partial derivatives, another one would be continuous automatically.) We check
the value of jumps of $B_{x_2}$, because this direction is transversal to the boundary
for any $\ve$. According to~\eqref{Bx2}, on $\Omega_2$ the derivative $B_{x_2}$ is
strictly negative and on $\Omega_1$ and $\Omega_3$ it is nonnegative, therefore $B_{x_2}$
monotonously decreases in $x_2$, as we need. To prove local concavity of $B$ everywhere,
it remains to check that the Hessian matrix
$$
\frac{d^2B}{dx^2}\df\left(\begin{matrix}
B_{x_1x_1}&B_{x_1x_2}\\B_{x_2x_1}&B_{x_2x_2}
\end{matrix}\right)
$$
is non-positive. On $\Omega_1\cup\Omega_2\cup\Omega_5$ the function is linear, and
therefore there is nothing to check. On $\Omega_3$ we have
$$
\frac{d^2B}{dx^2}=\left(\begin{matrix}
\displaystyle
-\frac{2(\lambda^2-x_2)^2}{(x_2+\lambda^2-2\lambda|x_1|)^3}&\rule[-20pt]{0pt}{20pt}
\displaystyle
\frac{2(\lambda^2-x_2)(\lambda-|x_1|)}{(x_2+\lambda^2-2\lambda|x_1|)^3}\sign{x_1}
\\
\displaystyle
\frac{2(\lambda^2-x_2)(\lambda-|x_1|)}{(x_2+\lambda^2-2\lambda|x_1|)^3}\sign{x_1}&
\displaystyle
-\frac{2(\lambda-|x_1|)^2}{(x_2+\lambda^2-2\lambda|x_1|)^3}
\end{matrix}\right)\le0\,,
$$
and on $\Omega_4$
$$
\frac{d^2B}{dx^2}=\frac{e^{1+r-\frac\lambda\ve}}{8\ve^3\sqrt{\ve^2-x_2+x_1^2}}
\left(\begin{matrix}
-4\ve^2r^2&2\ve r
\\
2\ve r&-1\rule{0pt}{15pt}
\end{matrix}\right)\le0\,,
$$
where $r=\frac1\ve\big(|x_1|+\sqrt{\ve^2-x_2+x_1^2}\,\big)$.

In a similar way it is possible to check local concavity of the functions $B$
defined by~\eqref{JNBel1}, \eqref{JNBel2}, and~\eqref{1max}, thus to complete the
proof the upper estimate $\Bel\le B$ both in Theorems~\ref{WJNBel} and~\ref{1jump}.

To prove the converse inequality we construct extremal test functions
({\it extremizers\/}) realizing supremum in the definition of the Bellman function.
Again, we restrict ourself by the consideration of the most difficult 
case~\eqref{JNBel3} only. Moreover, it is sufficient to consider
only the points with $x_1\ge0$, because if $f$ is an extremizer
for a point $(x_1,x_2)$, then the function $-f$ is an extremizer for
the point $(-x_1,x_2)$.

All points of $\Omega_1$ can be represented as a convex combination
of the points of the boundary, where $|x_1|\ge\lambda$, i.e. $B(x)\ge1$. 
Therefore, the corresponding extremizer can be constructed as a step function 
consisting of two constants. Namely, for an arbitrary $x\in\Omega_1$ we draw 
the tangent line to the upper boundary so that
the tangent point is to the right from $x$. First coordinates of two points 
of intersection of that tangent line with the lower boundary are
$u^\pm=x_1\pm\ve+\sqrt{\ve^2-x_2+x_1^2}$, and the corresponding extremizer is
$$
\vf(t)=
\begin{cases}
u^-,&\text{if}\ 0<t<\frac{u^+-x_1}{2\ve},
\\
u^+,&\text{if}\ \frac{u^+-x_1}{2\ve}<t<1.
\end{cases}
$$
By direct calculation we check that $(\av\vf{[0,1]},\av{\vf^2}{[0,1]})=x$
and $\vf\ge\lambda$. First of all we note that
\begin{align*}
u^+-u^-&=2\ve,
\\
u^++u^-&=2\Big(x_1+\sqrt{\ve^2-x_2+x_1^2}\Big),
\\
u^+u^-&=\Big(x_1+\sqrt{\ve^2-x_2+x_1^2}\Big)^2-\ve^2.
\end{align*}
Therefore,
\begin{align*}
\av\vf{[0,1]}&=u^-\frac{u^+-x_1}{2\ve}+u^+\Big(1-\frac{u^+-x_1}{2\ve}\Big)
\\
&=u^+-\frac{(u^+-x_1)(u^+-u^-)}{2\ve}=x_1,
\\
\av{\vf^2}{[0,1]}&=(u^-)^2\frac{u^+-x_1}{2\ve}+(u^+)^2\Big(1-\frac{u^+-x_1}{2\ve}\Big)
\\
&=(u^+)^2-\frac{(u^+-x_1)(u^+-u^-)(u^++u^-)}{2\ve}
=x_1(u^++u^-)-u^+u^-
\\
&=2x_1\Big(x_1+\sqrt{\ve^2-x_2+x_1^2}\Big)
-\Big(x_1+\sqrt{\ve^2-x_2+x_1^2}\Big)^2+\ve^2=x_2.
\end{align*}
To prove that $\vf\ge\lambda$ we need to check that $u_-\ge\lambda$.
If $x_1\ge\lambda+\ve$, then everything is trivial:
$$
u_-\ge x_1-\ve\ge\lambda.
$$
If $x_1<\lambda+\ve$, then the second coordinate of a point $x$ from
$\Omega_1^+$ satisfies the following additional condition
$x_2\le2(\lambda+\ve)x_1-\lambda^2-2\ve\lambda$. Therefore,
$$
\ve^2-x_2+x_1^2\ge\ve^2+x_1^2-2(\lambda+\ve)x_1+\lambda^2+2\ve\lambda
=(\lambda+\ve-x_1)^2,
$$
and hence,
$$
u_-\ge x_1-\ve+|\lambda+\ve-x_1|=\lambda.
$$

What we need more to check is the fact that the $\BMO$-norm of
our extremizer does not exceed $\ve$. In fact it is equal to
$\ve$, since the $\BMO$-norm of any step function consisting of
two steps is equal to the half of the jump and in our case
$u^+-u^-=2\ve$. So, we have proved that $\Bel\ge1$ in $\Omega_1$.

Now, we consider a point $x$ from $\Omega_3^+$.
A similar step function consisting of two steps will be an extremizer
here. We have to draw a straight line through the points $x$ and
$(\lambda,\lambda^2)$. It intersects the lower boundary in one
more point with the first coordinate
$u=\frac{\lambda x_1-x_2}{\lambda-x_1}$. We take a step function
consisting of steps $\lambda$ and $u$:
$$
\vf(t)=
\begin{cases}
\lambda,&\text{if}\ 0<t<a,
\\
u,&\text{if}\ a<t<1,
\end{cases}
$$
where $a=\frac{x_2-x_1^2}{x_2+\lambda^2-2\lambda x_1}$. By
direct calculation we can check that
\begin{align*}
\av\vf{[0,1]}&=\lambda a+u(1-a)=x_1,
\\
\av{\vf^2}{[0,1]}&=\lambda^2 a+u^2(1-a)=x_2.
\end{align*}
The fact that $\vf\in\BMO_\ve$ is geometrically clear, because
a Bellman point corresponding to $\vf$ and any subinterval of $[0,1]$
is in $\Omega_3$. However this is easy to check formally as well.
The jump is
$$
\lambda-u=\lambda-x_1+\frac{x_2-x_1^2}{\lambda-x_1}\,.
$$
Since $x_2\le2(\lambda-\ve)x_1-\lambda^2+2\lambda\ve$ for $x\in\Omega_3^+$, we have
$$
x_2-x_1^2\le(\lambda-x_1)(2\ve-\lambda+x_1),
$$
and hence
$\lambda-u\le2\ve$. So, we conclude that
$$
\Bel\ge a=\frac{x_2-x_1^2}{x_2+\lambda^2-2\lambda x_1}\,.
$$

To consider a point $x\in\Omega_2^+$ we note that this point is
a convex combination of three point on the lower boundary $\Lambda$
and $\Lambda^\pm$ with the first coordinates $\lambda$ and
$\lambda\pm2\ve$ respectively. As a result we construct an
extremizer as a step function consisting of these three steps:
$$
\vf(t)=
\begin{cases}
\lambda-2\ve,&\text{if}\ 0<t<a,
\\
\quad\lambda,&\text{if}\ a<t<b,
\\
\lambda+2\ve,&\text{if}\ b<t<1.
\end{cases}
$$
For $\vf$ to be a test function corresponding the point $x$ (i.e.
for $\av\vf{[0,1]}=x_1$ and $\av{\vf^2}{[0,1]}=x_2$) we need to
take
$$
a=\frac{x_2+\lambda^2-2\lambda x_1-2\ve(x_1-\lambda)}{8\ve^2}
$$
and
$$
b=1-\frac{x_2+\lambda^2-2\lambda x_1+2\ve(x_1-\lambda)}{8\ve^2}\,.
$$
The easiest way to prove that $\vf\in\BMO$ is the following geometric
consideration. Take any straight line, say $L$, passing through $x$
and not intersecting the upper parabola. Note that we need to consider
the oscillation of $\vf$ only over intervals $[\alpha,\beta]$
containing $[a,b]$, because in other case $\vf$ would have on
$[\alpha,\beta]$ only one jump of size $2\ve$, but as we know the
$\BMO$-norm of such step function is just $\ve$. Our point $x$
is a convex combination of three Bellman points $x^{[0,\alpha]}$,
$x^{[\alpha,\beta]}$, and $x^{[\beta,1]}$. But since the points
$x^{[0,\alpha]}=\Lambda^-$ and $x^{[\beta,1]}=\Lambda^+$ are
above the line $L$, the point $x^{[\alpha,\beta]}$ has to be below
this line and therefore in $\Omega_\ve$. This means just what we
need that the oscillation over $[\alpha,\beta]$ does not exceed $\ve$.

It remains to note that the measure of the set where $\vf\ge\lambda$
is $1-a$, i.e. in $\Omega_2$ we have
$$
\Bel\ge 1-a=
1-\frac{x_2+\lambda^2-2\lambda x_1-2\ve(x_1-\lambda)}{8\ve^2}\,.
$$

To get an extremizer for a point $x$ on the intersection of the
upper parabola with $\Omega_4^+$ we need to concatenate the
logarithmic function with the step function corresponding to the
upper right corner of $\Omega_4^+$, i.e. with the step function
consisting of two steps of equal size with the values $\lambda$
and $\lambda-2\ve$. For an arbitrary point $x\in\Omega_4^+$
we have cut the latter function from below on the corresponding
level. As a result we get the following
$$
\vf(t)=
\begin{cases}
\qquad\lambda,&\text{if}\ 0<t<a,
\\
\quad\lambda-2\ve,&\text{if}\ a<t<2a,
\\
\lambda-2\ve+\ve\log\frac{2a}t,&\text{if}\ 2a<t<b,
\\
\lambda-2\ve+\ve\log\frac{2a}b,&\text{if}\ b<t<1.
\end{cases}
$$
As in the previous case, we could write down two equations
$\av\vf{[0,1]}=x_1$ and $\av{\vf^2}{[0,1]}=x_2$ and solving them
to find the appropriate value of the parameters $a$ and $b$.
However it is easier to find $a$ and $b$ using other arguments
and after that simply to check that the averages have the
desired values. For this aim we consider splitting of the interval
$[0,1]$ at the point $b$. In result we get two Bellman points
$V=x^{[0,b]}$ and $U=x^{[b,1]}$. The point $U=(u,u^2)$ is on the
lower boundary, it corresponds to the constant function
$u=\lambda-2\ve+\ve\log\frac{2a}b$. The point $V$ has to be on
the on the upper boundary and the segment $[U,V]$ has to be a
segment of the extremal line passing through $x$, i.e. a segment
of the tangent line to the upper parabola. 
(We mean here the extremal lines of the solution of the corresponding 
Monge--Amp\`ere equation, which is lurking behind all our considerations.)
But it is easy to
calculate the coordinates of the points of intersection the
tangent line to the upper parabola passing through the point $x$:
$$
u=x_1-\ve+\sqrt{\ve^2-x^2+x_1^2}\,,
$$
whence
$$
\log\frac{2a}b=1+\frac{x_1-\lambda}\ve+\sqrt{1-\frac{x_2-x_1^2}{\ve^2}}\,.
$$
Furthermore, the length of the horizontal projection of $[U,V]$ is
just $\ve$, i.e. the splitting ratio is
$$
b=\frac{x_1-u}\ve=1-\sqrt{1-\frac{x_2-x_1^2}{\ve^2}}\,,
$$
and finally
$$
a=\frac e2\left(1-\sqrt{1-\frac{x_2-x_1^2}{\ve^2}}\right)
\exp\left\{\frac{x_1-\lambda}\ve+\sqrt{1-\frac{x_2-x_1^2}{\ve^2}}\right\}.
$$
We omit verification that for this parameters $a$ and $b$ averages
of $\vf$ and $\vf^2$ have the prescribed values. To finish our proof
of the desired estimate
$$
\Bel(x)\ge a
$$
for any $x\in\Omega_4$, it remains to verify that the norm of our
test function $\vf$ does not exceed $\ve$. Again this verification
will be geometric. Consider the following curve in $\Omega_\ve$ built 
by using $\vf$ mentioned above:
$$
\psi(t)=x^{[0,t]},\qquad t\in[0,1].
$$
For $t\in[0,a]$ the point $\psi(t)$ stands at
$\Lambda=(\lambda,\lambda^2)$. At the moment $t=a$ it starts to
move to the left along the tangent line to the upper boundary.
At the moment $t=2a$ it reaches the upper parabola and continue
its movement along this upper boundary till the point $V$. It reaches
$V$ at the moment $t=b$ and then continues along $[U,V]$. The
destination point is $\psi(1)=x$. Note that this curve is convex.
Take now an arbitrary subinterval $[\alpha,\beta]\subset[0,1]$ and
draw a straight line $L$ passing through $\psi(\beta)$ and tangent to
our curve $\psi$ (i.e. tangent to the upper parabola). Since $\psi$ is
concave, the point $\psi(\alpha)=x^{[0,\alpha]}$ is above $L$ (more 
precisely, not below $L$). And we conclude that the point $x^{[\alpha,\beta]}$ 
has to be below $L$ (more precisely, not
above $L$), because the point $\psi(\beta)$ (on $L$)
is a convex combination of the point $\psi(\alpha)$ (above $L$) and
$x^{[\alpha,\beta]}$. Therefore, the latter point is in $\Omega_\ve$,
i.e. the oscillation of $\vf$ over this interval does not exceed
$\ve$.

Finally, we have to consider the most difficult case $x\in\Omega_5$.
We shall proceed as in the triangle domain $\Omega_2^+$. Arbitrary
point of $\Omega_5$ is a convex combination of three points: the
origin and $E^\pm=(\pm\ve,2\ve^2)$. Since $E^\pm\in\Omega_4^\pm$, we
already know the extremizers for these points, but for the origin
there is the only test function, namely, the constant zero function.
We concatenate these three function in the proper order (to get a
monotonous function in result). This will be the desired extremizer:
$$
\vf(t)=
\begin{cases}
\qquad-\lambda,&\text{if}\ 0<t<a_-,
\\
\quad-\lambda+2\ve,&\text{if}\ a_-<t<2a_-,
\\
\ve\log\frac{t}{\,2a_-}-\lambda+2\ve,&\text{if}\ 2a_-<t<b_-,
\\
\qquad\quad 0,&\text{if}\ b_-<t<1-b_+,
\\
\ve\log\frac{\,2a_+}{1-t}+\lambda-2\ve,&\text{if}\ 1-b_+<t<1-2a_+,
\\
\quad\phantom{-}\lambda-2\ve,&\text{if}\ 1-2a_+<t<1-a_+,
\\
\qquad\phantom{-}\lambda,&\text{if}\ 1-a_+<t<1.
\end{cases}
$$
The continuity of $\vf$ at the points $t=b_-$ and $t=1-b_+$ yields
$$
\frac{b_-}{2a_-}=\frac{b_+}{2a_+}=\exp\big(\frac\lambda\ve-2\big).
$$
From the representation
$$
x=b_-E^-+b_+E^++(1-b_--b_+){\mathbf0}
$$
we get two equations for $b_\pm$:
\begin{align*}
x_1&=-\ve b_-+\ve b_+,
\\
x_2&=2\ve^2 b_-+2\ve^2 b_+,
\end{align*}
whence
$$
b_\pm=\frac{x_2\pm2\ve x_1}{4\ve^2}\,,
$$
and therefore,
$$
a_\pm=\frac12b_\pm\exp\big(2-\frac\lambda\ve\big)
=\frac{x_2\pm2\ve x_1}{8\ve^2}\exp\big(2-\frac\lambda\ve\big)\,.
$$
Again we omit verification that $\av\vf{[0,1]}=x_1$ and 
$\av{\vf^2}{[0,1]}=x_2$, we only say few words how to check
that the norm of $\vf$ does not exceed $\ve$. We shall proceed
as in the triangle domain $\Omega_2^+$. Take any straight line
$L$ passing through $x$ and not intersecting the upper parabola.
Note that we need to consider the oscillation of $\vf$ only over
intervals $[\alpha,\beta]$ containing $[b_-,1-b_+]$, because in
other case $\vf$ on $[\alpha,\beta]$ is a part of test function
considered for the domain $\Omega_4$. Our point $x$
is a convex combination of three Bellman points $x^{[0,\alpha]}$,
$x^{[\alpha,\beta]}$, and $x^{[\beta,1]}$. It is clear that the
points $x^{[0,\alpha]}$ and $x^{[\beta,1]}$ are above the line $L$
(they are somewhere on the left and right curves considered for
the points from $\Omega^4_\pm$). Therefore, the point
$x^{[\alpha,\beta]}$ has to be below the line $L$, i.e. in
$\Omega_\ve$. This means just what we need that the oscillation
over $[\alpha,\beta]$ does not exceed $\ve$.

It remains to note that the measure of the set where $\vf\ge\lambda$
is $a_+$ and the measure of the set where $\vf\le-\lambda$
is $a_-$, i.e. in $\Omega_5$ we have
$$
\Bel\ge a_-+a_+=
\frac{x_2}{4\ve^2}\exp\big(2-\frac\lambda\ve\big)\,.
$$

This completes the proof of formula~\eqref{JNBel3}. Extremizers
for all other cases of Theorem~\ref{WJNBel} and Theorem~\ref{1jump}
are absolutely similar to those just built.
\qed

\section{How to find the expression for the Bellman function\\
and formulas for extremizers}

The theorems presented in this paper were proved in 2006, when
the problem of finding a Bellman function was a kind of art.
Using some heuristic arguments the whole domain was splitting
in several subdomains, thereafter the corresponding boundary
value problem for the homogeneous Monge--Amp\`ere equation
was solved. The solutions were glued together continuously to
get a locally convex function in the entire domain. After that,
using known foliation of the domain by the extremal lines of the solution of the Monge--Amp\`re equation, the
extremizers were constructed for every point of the domain.
{\color{yellow}
The pieces of such an approach can be found in  in \cite{SV}, \cite{VV}, \cite{VoPr}, \cite{VV1}.
The latter paper has a lengthy explanation of extremal lines  of the solutions of the Monge--Amp\`ere equation, and their pertinence to the best constant problems of Harmonic Analysis. }
But nowadays this is already an elaborated machinery. For
sufficiently smooth boundary values all of these is already written
(see~\cite{IOSVZ}). From there it is absolutely clear how to proceed
in more general situation and the corresponding text will appear
soon. By this reason we omit here any explanation about method
of finding these Bellman function --- the description of the original
way of reasoning has no sense, but to describe here the modern state
of the theory is impossible, because it would require enormous amount
of place. We refer the reader to two papers~\cite{IOSVZ}
and~\cite{SV1} for explanation of methods of solving Monge--Amp\`ere 
equation in the parabolic strip, and  to~\cite{VV} for more general cases.

The same can be said about of finding extremal test functions and
especially about the proof that the found function has the desired
$\BMO$-norm. The geometric method of proving that the $\BMO$-norm of
the extremizers does not exceed $\ve$ first appeared
in~\cite{SV1} for some special cases and then was generalized
in~\cite{IOSVZ}, where the notion of {\it delivery curves} appeared.
Traces of this notion the reader can see in the presented proof.
We have to say that this part of the proof is modern, not the
original one. The calculation of the $\BMO$-norms of extremizers
in 2006 was made by the straightforward calculation. These were
awful calculations, enormous amount of calculations. There were
impossible to place them in any paper. Maybe, that was one of the
reasons why this result was prepared for publication five years
after it was proved.

\end{document}